\documentclass[12pt,a4paper,reqno]{amsart}
\usepackage{amsmath, amsthm, amscd, amsfonts,graphicx}

\usepackage{color}
\usepackage{enumerate}
\numberwithin{equation}{section}
\theoremstyle{plain}
 \newtheorem{theorem}{Theorem}[section]
 \newtheorem{lemma}[theorem]{Lemma}

 \newtheorem{corollary}[theorem]{Corollary}
\theoremstyle{definition}
 
 \newtheorem{remark}[theorem]{Remark}
\theoremstyle{remark}
 \newtheorem{case}{Case}
 
\newenvironment{enumeratei}{\begin{enumerate}[\quad\upshape (i)]} {\end{enumerate}}
% Personal macros (new)
\DeclareMathOperator \Sub {Sub}
\DeclareMathOperator \Con {Con}
\newcommand \NS {NS}
\newcommand\nsub [1] {\textup{\NS}(#1)}
\newcommand\ncon [1] {\textup{NC}(#1)}
\newcommand\url [1] {\texttt{#1}}
\newcommand \ssty [1] {\scriptscriptstyle{#1}}
\newcommand \gluplus {\mathop{+_{\ssty{glu}}}}
\newcommand \chain [1] {C^{(#1)}}
\newcommand \ideal {\mathord{\downarrow}}
\newcommand \filter {\mathord{\uparrow}}
\newcommand \then {\Rightarrow}
\newcommand \achs [1] {$#1$-antichain}
\newcommand \achp [1] {$#1$-antichains}
\newcommand \fjsl [1] {F_{\textup{jsl}}(#1)}
\newcommand \fpl [1] {F_{\textup{lat}}(#1)}
\newcommand\set [1]{\{#1\}}
\newcommand \tuple [1] {\langle #1 \rangle}
\newcommand \pair [2] {\tuple{#1,#2}}
\newcommand \nplu {\mathbb N^+}
\newcommand \tbf[1] {\textbf{#1}}
\renewcommand \phi {\varphi}

% Temporary macros
%\newcommand \badgood[2]{\red{#2}} %Beepitett hiba kikuszobolve, szines
%\newcommand \badgood[2]{\red{#1}} %Beepitett hiba
\newcommand \badgood[2]{{#2}} %Beepitett hiba kikuszobolve
\newcommand \red [1] {{\color{red}#1\color{black}}}

\newcommand \nothing [1] {}

%
%
%***********************************************
%
\begin{document}
%Intended title:
\title[A note on lattices with many sublattices]
{A note on lattices with many sublattices}

%Temporary title
%\title[\red{The first three largest numbers}]
%{\red{The first three largest numbers of subuniverses of lattices}}

\author[G.\ Cz\'edli]{G\'abor Cz\'edli}
\address{University of Szeged, Bolyai Institute, Szeged,
Aradi v\'ertan\'uk tere 1, Hungary 6720}
\email{czedli@math.u-szeged.hu}
\urladdr{http://www.math.u-szeged.hu/\textasciitilde{}czedli/}

\author[E.\,K.\ Horv\'ath]{Eszter\ K.\ Horv\'ath}
\address{University of Szeged, Bolyai Institute\\Szeged, Aradi v\'ertan\'uk tere 1, Hungary 6720}
\email{horeszt@math.u-szeged.hu}
\urladdr{http://www.math.u-szeged.hu/\textasciitilde{}horvath/}

\thanks{This research of was supported by the Hungarian Research Grants KH 126581.}

\subjclass[2010]{06B99}
%06B99 (1980-now) None of the above, but in this section 

\keywords{Finite lattice, sublattice, number of sublattices, subuniverse}

\begin{abstract} For  every natural number $n\geq 5$, we prove that the number of
subuniverses of an $n$-element lattice is $2^n$, 
$13\cdot 2^{n-4}$, $23\cdot 2^{n-5}$, or less than $23\cdot 2^{n-5}$. By a subuniverse, we mean a sublattice or the emptyset.
Also, we describe the $n$-element lattices with exactly $2^n$, 
$13\cdot 2^{n-4}$, or $23\cdot 2^{n-5}$ \badgood{sublattices}{subuniverses}.
\end{abstract}

\date{\red{\hfill   December 30, 2018}}

\maketitle

\section{Introduction and our result}
For a lattice $L$, $\Sub(L)$ will denote its  \emph{sublattice lattice}. In spite of this standard terminology, $\Sub(L)$  consists of all \emph{subuniverses} of $L$. That is, a subset $X$ of $L$ is in $\Sub(L)$ iff $X$ is closed with respect to join and meet. In particular, $\emptyset\in\Sub(L)$. Note that for $X\in \Sub(L)$, $X$ is a sublattice of $L$ if and only if $X$ is nonempty. 
All lattices occurring in this paper will be assumed to be \emph{finite} even if this is not always emphasized.  
For a natural number $n\in\nplu:=\set{1,2,3,\dots}$, let 
\begin{equation*}
\nsub n:=\set{|\Sub(L)|: L\textup{ is a lattice of size }|L|=n}
.
\end{equation*}
That is, $k\in \nsub n$ if and only if some $n$-element lattice has exactly $k$ subuniverses. Although the acronym \NS{} comes from Number of Sublattices,  $L$ has only  $|\Sub(L)|-1$ \emph{sublattices}. 
If $K$ and $L$ are finite lattices, then their \emph{glued sum} $K\gluplus L$  is the ordinal sum of the posets $K\setminus{1_K}$, the singleton lattice, and $L\setminus\set{0_L}$, in this order. In other words, we put $L$ atop $K$ and identify the elements $1_K$ and $0_L$; see Figure~\ref{figone}. For example, if each of $K$ and $L$ is the two-element chain, then  $K\gluplus L$ is the three-element chain. Note that $\gluplus$ is an associative but not \badgood{}{a }commutative operation.

\begin{figure}[htb] 
\centerline
{\includegraphics[scale=1.1]{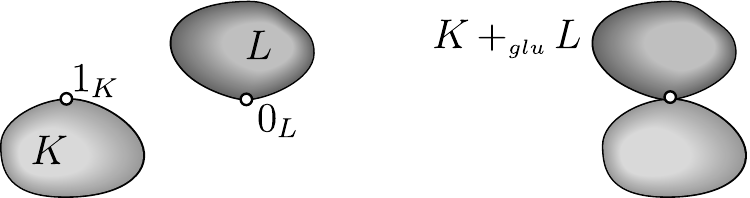}}
\caption{The glued sum $K\gluplus L$ of $K$ and $L$
\label{figone}}
\end{figure}%

Our goal is to prove the following result; the \emph{four-element boolean lattice} $B_4$ and the \emph{pentagon} (lattice) $N_5$ are given in Figure~\ref{figtwo}.

\begin{theorem}\label{thmmain}
If $5\leq n\in\nplu$, then the following three assertions hold.
\begin{enumeratei}
\item\label{thmmaina} The largest number in $\nsub n$ is $2^n=32\cdot 2^{n-5}$. Furthermore, an $n$-element lattice $L$ has exactly $2^n$ subuniverses if an only if $L$ is a chain.
\item\label{thmmainb}
 The second largest number in $\nsub n$ is $26\cdot 2^{n-5}$. Furthermore, an $n$-element lattice $L$ has exactly $26\cdot 2^{n-5}$ subuniverses if and only if $L\cong C_1\gluplus B_4\gluplus C_2$, where $C_1$ and $C_2$ are chains.
\item\label{thmmainc}  The third largest number in $\nsub n$ is $23\cdot 2^{n-5}$. Furthermore, an $n$-element lattice $L$ has exactly $23\cdot 2^{n-5}$ subuniverses if and only if $L\cong C_0\gluplus N_5\gluplus C_1$, where \badgood{$C_1$ and $C_2$}{$C_0$ and $C_1$} are chains.
\end{enumeratei}
\end{theorem}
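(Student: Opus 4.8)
The plan is to reduce everything to the behaviour of $\Sub$ under glued sums and then to an extremal analysis of the ``indecomposable blocks'' of $L$. First I would record the counting rule for the glued sum: if $p=1_K=0_L$ is the gluing point of $K\gluplus L$, then a subset $X$ is a subuniverse if and only if $X\cap K\in\Sub(K)$, $X\cap L\in\Sub(L)$, and $p\in X\cap K\iff p\in X\cap L$; the cross joins and meets cause no trouble because every element of $K$ lies below every element of $L$. Encoding each bounded lattice $A$ by the $2\times2$ matrix $M(A)=(N_{ij})$, where $N_{ij}$ counts the subuniverses according to whether they contain $0_A$ and $1_A$, this rule becomes $M(K\gluplus L)=M(K)\,M(L)$ and $|\Sub(A)|=\mathbf 1^{\mathsf T}M(A)\,\mathbf 1$. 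Two computations I would do at the outset are $M(C_2)=\left(\begin{smallmatrix}1&1\\1&1\end{smallmatrix}\right)$ for the two-element chain, together with the values $|\Sub(B_4)|=13$ and $|\Sub(N_5)|=23$.

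Part (i) is then immediate: a chain realizes all $2^n$ subsets, while any non-chain contains incomparable $a,b$, for which $\{a,b\}\notin\Sub(L)$, so $|\Sub(L)|\le 2^n-1$. For (ii) and (iii) I would use the fact that the \emph{narrows} of $L$ (the elements comparable to everything) cut $L$ into a glued sum $A_1\gluplus\cdots\gluplus A_k$ of blocks, each either a covering edge (matrix $J=\left(\begin{smallmatrix}1&1\\1&1\end{smallmatrix}\right)$) or a glued-sum-indecomposable non-chain piece of at least four elements. Since $J\,M\,J=(\mathbf 1^{\mathsf T}M\mathbf 1)\,J$, stripping the chain parts from the top and bottom of a single non-chain core $A$ yields the clean formula $|\Sub(L)|=2^{\,n-|A|}\,|\Sub(A)|=2^n\rho(A)$ with density $\rho(A)=|\Sub(A)|/2^{|A|}$; here $\rho(B_4)=\tfrac{26}{32}$ and $\rho(N_5)=\tfrac{23}{32}$. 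Blocks separated by chains multiply their densities, and since each non-chain block forces a factor at most $13/16$, a short computation shows that two or more non-chain blocks, whether adjacent or chain-separated, keep the density strictly below $(13/16)^2<23/32$.

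The heart of the argument, and the step I expect to be the main obstacle, is the \emph{density classification}: every glued-sum-indecomposable non-chain lattice $A$ satisfies $\rho(A)\le 23/32$, with equality exactly for $A=N_5$, the only lattice of larger density being $B_4$ ($\rho=26/32$). I would prove this by induction on $|A|=m$. The cases $m=4$ (only $B_4$) and $m=5$ (only $N_5$ and $M_3$, with $\rho(M_3)=20/32$) are settled by listing the few indecomposable lattices of those sizes. For the inductive step I would delete a suitable doubly irreducible element $x$ to obtain a lattice $A'$ on $m-1$ elements; since $x$ is both join- and meet-irreducible, the subuniverses of $A$ avoiding $x$ are exactly those of $A'$, so $|\Sub(A)|=|\Sub(A')|+s$, where $s$ counts the subuniverses containing $x$. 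The delicate part is bounding $s$ sharply enough, using the unique lower and upper covers of $x$, to force $\rho(A)<23/32$ for $m\ge6$; the casework concentrates here, it must accommodate the possibility that $A'$ becomes decomposable, and it must also treat the indecomposable lattices that possess no doubly irreducible element (where an extremal element has to be removed instead).

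Finally I would assemble the pieces. Every non-chain $L$ has density at most $26/32$, attained only when the block decomposition consists of a single $B_4$ core padded by chains, i.e. $L\cong C_1\gluplus B_4\gluplus C_2$; this proves (ii). Among the remaining non-chains the density is at most $23/32$, attained only by a single $N_5$ core padded by chains, i.e. $L\cong C_0\gluplus N_5\gluplus C_1$, since every other configuration -- a larger or different indecomposable core, or two or more non-chain blocks -- has strictly smaller density by the classification and the multiplicativity estimates. Translating densities back through $|\Sub(L)|=2^n\rho(L)$ yields the stated second and third largest values $26\cdot 2^{n-5}$ and $23\cdot 2^{n-5}$ together with their equality cases, completing (iii).
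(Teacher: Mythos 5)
Your transfer-matrix formalism for glued sums is correct, part (i) is fine, and the identity $|\Sub(C_0\gluplus A\gluplus C_1)|=2^{n-|A|}\,|\Sub(A)|$ is a legitimate repackaging of what the paper proves in Lemma~\ref{lemmasublat}\eqref{lemmasublatc}. But there is one concrete error and one genuine gap. The error: densities do \emph{not} multiply for adjacent blocks. Your identity $JMJ=(\mathbf 1^{\mathsf T}M\mathbf 1)J$ factors the product only when at least one covering-edge block (a factor $J$) separates two non-chain blocks; when they share the gluing point there is no $J$ between the matrices. Indeed $|\Sub(B_4\gluplus B_4)|=85$, i.e.\ density $85/128=170/256$, which is strictly \emph{greater} than $(13/16)^2=169/256$. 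So the claim that two or more non-chain blocks, ``whether adjacent or chain-separated, keep the density strictly below $(13/16)^2$'' is false. The conclusion you need ($<23/32$) happens to survive, but only via the direct computations $|\Sub(B_4\gluplus B_4)|=21.25\cdot 2^2$ and $|\Sub(B_4\gluplus\chain2\gluplus B_4)|=21.125\cdot 2^3$ (the paper's Lemma~\ref{lemmaLHFtfsZh}\eqref{lemmaLHFtfsZhd},\eqref{lemmaLHFtfsZhe}), not via a product estimate.

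The gap is the ``density classification'' itself: that every glued-sum-indecomposable non-chain $A$ other than $B_4$ has $\rho(A)\le 23/32$, with equality only for $N_5$. This is precisely the hard content of parts (ii) and (iii), and you only sketch it. The proposed induction on deleting a doubly irreducible $x$ gives merely $|\Sub(A)|\le 2|\Sub(A')|$, i.e.\ $\rho(A)\le\rho(A')$, which is useless when $A'$ degenerates to a chain or to a padded $B_4$ (density $1$ or $26/32$); all the work is hidden in ``bounding $s$ sharply,'' which is not done, and indecomposable lattices with no doubly irreducible element (e.g.\ $M_3$, $B_8$) are only mentioned, not handled. The paper avoids classifying indecomposable lattices altogether: it shows that any $L$ outside the three extremal families either contains a \achs3 (then, via the free join-semilattice on three generators and a four-way case split, $|\Sub(L)|\le 20\cdot 2^{n-5}$), or two non-disjoint \achp2 (then, via the Rival--Wille finitely presented lattice, an $N_5$ or a $\chain2\times\chain3$ sublattice), or two disjoint \achp2 (then a $B_4\gluplus B_4$ or $B_4\gluplus\chain2\gluplus B_4$ sublattice), and in each case the sublattice bound of Lemma~\ref{lemmasublat}\eqref{lemmasublatb} finishes. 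Until you supply a complete proof of your classification lemma (or replace it by such a sublattice-hunting argument), the proposal does not establish (ii) and (iii).
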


\begin{figure}[htb] 
\centerline
{\includegraphics[scale=1.1]{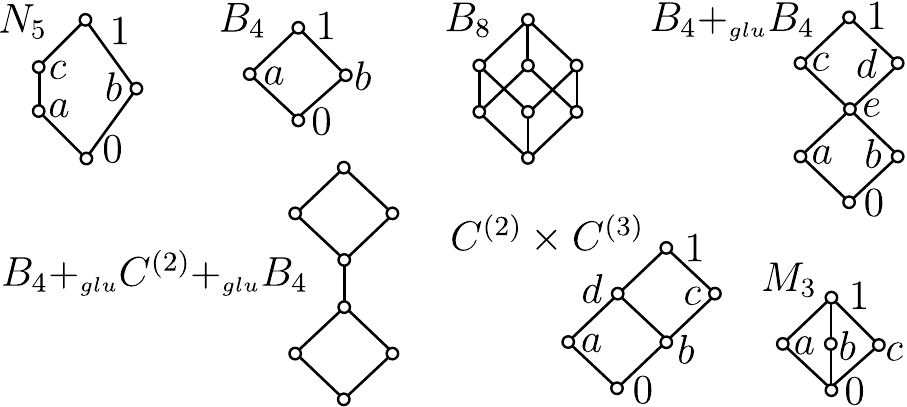}}
\caption{Lattices for Theorem~\ref{thmmain} and Lemma~\ref{lemmaLHFtfsZh}
\label{figtwo}}
\end{figure}%

Since $\nsub n=\set{2^n}$ for $n\in\set{1,2,3}$ and
$\nsub 4 =\set{13,16}$, we have formulated this theorem only for $n\geq 5$. To make the comparison of the numbers occurring in the paper easier, we often give $|\Sub(L)|$ as a multiple of $2^{|L|-5}$. 
Next, we repeat the first sentence of the Abstract, which  is a trivial consequence of Theorem~\ref{thmmain}.

\begin{corollary}
For $5\leq n\in\nplu$,  \badgood{we prove that}{} 
the number of
subuniverses of an $n$-element lattice is $2^n$, 
$13\cdot 2^{n-4}$, $23\cdot 2^{n-5}$, or less than $23\cdot 2^{n-5}$.
\end{corollary}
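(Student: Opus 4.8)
The corollary is immediate from Theorem~\ref{thmmain}, which singles out the three largest members of $\nsub n$ as $2^n$, $26\cdot 2^{n-5}=13\cdot 2^{n-4}$ and $23\cdot 2^{n-5}$; consequently every member of $\nsub n$ either equals one of these three numbers or, being smaller than the third largest one, is less than $23\cdot 2^{n-5}$. So all the content sits in Theorem~\ref{thmmain}, and I describe how I would attack that.

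The engine I would build first is the behaviour of $|\Sub(\cdot)|$ under the operation $\gluplus$. For a lattice $P$ with $|P|\ge 2$, record the four numbers counting the subuniverses that contain both of $0_P,1_P$, only $0_P$, only $1_P$, or neither, and collect them into a matrix $T(P)=\left(\begin{smallmatrix}\alpha&\beta\\\gamma&\delta\end{smallmatrix}\right)$, so that $|\Sub(P)|$ is the sum of the four entries. In $K\gluplus L$ every element of $K$ lies below every element of $L$, whence a subset $X$ is a subuniverse exactly when $X\cap K$ and $X\cap L$ are subuniverses of $K$ and of $L$; splitting according to whether the glued point $1_K=0_L$ belongs to $X$ then yields the multiplicative law $T(K\gluplus L)=T(K)\,T(L)$. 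Because $T(C_2)=\left(\begin{smallmatrix}1&1\\1&1\end{smallmatrix}\right)$, gluing a single new bottom or top element multiplies $|\Sub(\cdot)|$ by $2$; iterating gives the doubling lemma $|\Sub(C_1\gluplus P\gluplus C_2)|=2^{\,n-|P|}\,|\Sub(P)|$ for chains $C_1,C_2$ and a core $P$, where $n$ denotes the total number of elements.

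Next I would decompose an arbitrary $L$ into glued-sum-indecomposable factors $L_1\gluplus\cdots\gluplus L_t$, so that $T(L)=T(L_1)\cdots T(L_t)$; here each chain factor is a copy of $C_2$ and the remaining factors are the non-chain cores. A chain has $|\Sub|=2^n$, giving assertion~(i). For the others I would compute the small cores directly, $|\Sub(B_4)|=13$ and $|\Sub(N_5)|=23$ (with the rival five-element lattice $M_3$ giving only $20$), and record the normalized densities $|\Sub(P)|/2^{|P|}$, namely $13/16$ for $B_4$ and $23/32$ for $N_5$. By the doubling lemma a single non-chain core $P$ surrounded by chains produces $|\Sub(L)|=2^{n}\cdot|\Sub(P)|/2^{|P|}$, so $B_4$ delivers the value $13\cdot 2^{n-4}=26\cdot 2^{n-5}$ of assertion~(ii) and $N_5$ the value $23\cdot 2^{n-5}$ of assertion~(iii).

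The hard part is to promote these sample computations to the exact global ranking together with the equality cases. I would need a uniform density bound: every glued-sum-indecomposable non-chain lattice $P$ satisfies $|\Sub(P)|\le\tfrac{13}{16}\,2^{|P|}$ with equality only for $P\cong B_4$, and, more sharply, the density stays strictly below $23/32$ for every such $P$ other than $B_4$ and $N_5$. This is the structural core of the proof, since indecomposable cores can be arbitrarily large and all of them must be controlled at once. Separately I would have to prevent several non-chain cores, glued together with chains, from landing inside the interval $\bigl(23\cdot 2^{n-5},\,26\cdot 2^{n-5}\bigr)$ or hitting $23\cdot 2^{n-5}$ by accident; here the matrix law is decisive, because each additional non-chain factor contributes a further density factor below $1$ (for instance $B_4\gluplus B_4$ on seven elements already gives $|\Sub|=85<92=23\cdot 2^{2}$), so only finitely many small configurations must be checked. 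Finally, the claimed characterizations $C_1\gluplus B_4\gluplus C_2$ and $C_0\gluplus N_5\gluplus C_1$ emerge by tracking when each of the above inequalities is tight.
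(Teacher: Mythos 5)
Your deduction of the corollary itself is exactly the paper's: the paper gives no separate argument, calling it a trivial consequence of Theorem~\ref{thmmain}, and your first paragraph carries out that (correct) one\-/line deduction, including the identity $13\cdot 2^{n-4}=26\cdot 2^{n-5}$. As a proof of the stated corollary, this is fine and identical in approach.

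The remainder of your text is a plan for Theorem~\ref{thmmain} itself, and there it diverges from the paper and has a real gap. Your transfer\-/matrix law $T(K\gluplus L)=T(K)\,T(L)$ is correct and packages neatly what the paper does by hand (the doubling under chain elements in Lemma~\ref{lemmasublat}\eqref{lemmasublatc} and the ad hoc counts in Lemma~\ref{lemmaLHFtfsZh}). But the step you yourself call the structural core --- that every glued\-/sum\-/indecomposable non\-/chain $P$ has $|\Sub(P)|\le\tfrac{13}{16}2^{|P|}$, with density below $23/32$ except for $B_4$ and $N_5$ --- quantifies over infinitely many lattices of unbounded size and is given no argument; it is precisely where all the difficulty lives. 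The paper never decomposes into indecomposable cores. Instead it shows that any $L$ with $|\Sub(L)|\ge 23\cdot2^{n-5}$ not of the listed shapes must contain one of finitely many small configurations ($B_8$, $M_3$, a certain six\-/element join\-/subsemilattice, $\chain2\times\chain3$, $B_4\gluplus B_4$, or $B_4\gluplus\chain2\gluplus B_4$), each of which forces $|\Sub(L)|\le 21.25\cdot 2^{n-5}$ by Lemma~\ref{lemmasublat}; the reduction to these finitely many cases is achieved by analysing \achp2 and \achp3 and quotients of the free join\-/semilattice and of the Rival--Wille lattice (Lemmas~\ref{lemmafreeslat}, \ref{lemmafreeLt}, \ref{lemmaXhrmlNc}). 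Note also that your heuristic ``each additional non\-/chain factor contributes a further density factor below $1$'' is not what the matrix law gives: the density of a glued sum is not the product of densities (indeed $|\Sub(B_4\gluplus B_4)|=85>\tfrac{13}{16}\cdot\tfrac{13}{16}\cdot 2^{7}=84.5$), so even the several\-/core case would need a genuine argument rather than a multiplicativity appeal. In short: the corollary is proved; the theorem, on your route, is only outlined, with the decisive uniform bound missing.
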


For $k\in\nplu$, the $k$-element chain will be denoted by $\chain k$.

\begin{remark}\label{remarkclJl}
Let $\Con(L)$ and $\ncon n$ stand for the lattice of congruences of a lattice $L$ and $\set{|\Con(L)|: L\text{ is a lattice with }|L|=n}$, respectively. For $n\geq 5$, the five largest numbers in $\ncon n$
are $16\cdot 2^{n-5}$, $8\cdot 2^{n-5}$, $5\cdot 2^{n-5}$,
$4\cdot 2^{n-5}$, and $3.5\cdot 2^{n-5}$ by Freese~\cite{freesecomplat}, Cz\'edli~\cite{czglatmanycongr} and, mainly, Kulin and Mure\c san~\cite{kulinmuresan}. 
\end{remark}

\begin{remark}\label{remarkfrthl}
Interestingly, the first three of the five numbers mentioned in Remark~\ref{remarkclJl} are witnessed exactly by the same lattices that occur in Theorem~\ref{thmmain}. However, we will show at the end of Section~\ref{sectionprepare} that
\begin{align}
|\Sub(N_5\gluplus \chain 3)|=23\cdot 2^{7-5} > 21.25\cdot 2^{7-5}\cr
=|\Sub(B_4\gluplus B_4)|>19\cdot 2^{7-5}\cr
=|\Sub((\chain2\times\chain3)\gluplus\chain 2)|,
\label{alignZhfTnhGRb}
\end{align}
which indicates that $|\Sub((\chain2\times\chain3)\gluplus\chain 2)|$ is not the fourth largest number in $\nsub 7$, although we know from 
Kulin and Mure\c san~\cite{kulinmuresan} that $|\Con((\chain2\times\chain3)\gluplus\chain 2)|$ is the fourth largest number in $\ncon 7$.
\end{remark}

While there are powerful tools to determine the first few large numbers in $\ncon n$, see the above-mentioned papers and, for additional tools,  Cz\'edli~\cite{czglatmancplanar}, the analogous task for $\nsub n$ seems to be more tedious.  This together with Remark~\ref{remarkfrthl} are our excuses that we do not \badgood{determined}{determine} the fourth and fifth largest numbers in $\nsub n$.

The rest of the paper is devoted to the proof of Theorem~\ref{thmmain}.

\section{Two preparatory lemmas}\label{sectionprepare}
Our notation and terminology is standard, see, for example, 
Gr\"atzer \cite{ggglt}, or see its freely available part at \url{tinyurl.com/lattices101}. However, we recall some notation and introduce some auxiliary concepts. 
%First, we fix some notation and terminology. 
For elements $u,v$ in a lattice $L$, the \emph{interval} $[u,v]:=\set{x\in L: u\leq x\leq v}$ is defined only if $u\leq v$, but the \emph{sublattice} $[\set{u,v}]$
generated by $\set{u,v}$ always makes sense. In order to avoid confusion,
the curly brackets are never omitted from $[\set{a_1,\dots,a_k}]$ when a generated sublattice is mentioned. 
For $u\in L$,
the principal ideal and the principal filter generated by $u$ are $\ideal u:=\set{x\in L: x\leq u}$ and  $\filter u:=\set{x\in L: u\leq x}$, respectively. We can also write $\ideal_L u$ and $\filter_L v$ to specify the lattice $L$.
For $u,v\in L$, we write $u\parallel v$ if $u$ and $v$ are \emph{incomparable}, that is, $u\not\leq v$ and $v\not\leq u$.  We say that $u$ is \emph{join-irreducible} if $u$ has at most one lower cover; note that \badgood{$1=1_L$}{$0=0_L$} is  join-irreducible by our convention. \emph{Meet-irreducibility} is defined dually, and an element is \emph{doubly irreducible} if it is both join-irreducible and meet-irreducible.  Next, let us call an element $u\in L$  \emph{isolated} if $u$ is doubly irreducible and $L=\ideal u\cup\filter u$. That is, if $u$ is  \badgood{double}{doubly} irreducible and  $x\parallel u$ holds for no $x\in L$. Finally, an interval $[u,v]$ will be called an \emph{isolated edge} if it is a prime interval, that is, $u\prec v$, and $L=\ideal u\cup \filter v$.

\begin{lemma}\label{lemmasublat}  If $K$ is a sublattice and $H$ is a subset of a finite lattice $L$, then the following three assertions hold.
\begin{enumeratei}
\item\label{lemmasublata} With the notation $t:=|\set{H\cap S: S\in \Sub(L)}|$, we have that $|\Sub(L)|\leq t\cdot 2^{|L|-|H|}$. 
\item\label{lemmasublatb} $|\Sub(L)|\leq |\Sub(K)|\cdot 2^{|L|-|K|}$.
\item\label{lemmasublatc} Assume, in addition, that $K$ has neither an isolated element, nor an isolated edge. Then  $|\Sub(L)| = |\Sub(K)|\cdot 2^{|L|-|K|}$ if and only if  $L$ is (isomorphic to) \badgood{$C_0\gluplus K\gluplus C_1$ for some chains $C_1$ and $C_2$}{$C_0\gluplus K\gluplus C_1$ for some chains $C_0$ and $C_1$}.
\end{enumeratei}
\end{lemma}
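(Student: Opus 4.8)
The plan is to base all three parts on one injective encoding map and then read off structure from the equality case.

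For assertion (i) I would map each $S\in\Sub(L)$ to the pair $\pair{H\cap S}{S\setminus H}$. Because $S=(H\cap S)\cup(S\setminus H)$, this map is injective; its image lies in $\set{H\cap S:S\in\Sub(L)}\times\mathcal P(L\setminus H)$, a set of cardinality $t\cdot 2^{|L|-|H|}$, whence the bound. For assertion (ii) I apply (i) with $H:=K$: since the intersection of the two subuniverses $K$ and $S$ is again a subuniverse contained in $K$, we get $\set{K\cap S:S\in\Sub(L)}\subseteq\Sub(K)$, so $t\le|\Sub(K)|$, and combining with (i) gives $|\Sub(L)|\le t\cdot 2^{|L|-|K|}\le|\Sub(K)|\cdot 2^{|L|-|K|}$.

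The engine for (iii) is the following reformulation of equality. Chasing the two inequalities just displayed, $|\Sub(L)|=|\Sub(K)|\cdot 2^{|L|-|K|}$ holds if and only if the encoding map $S\mapsto\pair{K\cap S}{S\setminus K}$ is a bijection of $\Sub(L)$ onto $\Sub(K)\times\mathcal P(L\setminus K)$; equivalently, if and only if $A\cup B\in\Sub(L)$ for every $A\in\Sub(K)$ and every $B\subseteq L\setminus K$. The \emph{if} part of (iii) is now the easy direction of this equivalence: when $L\cong C_0\gluplus K\gluplus C_1$, the set $L\setminus K$ is a chain, its lower part sits below all of $K$ and its upper part above all of $K$, and a short check of the four types of meets and joins shows that every such $A\cup B$ is closed, so the map is onto and equality holds. (Equivalently, adjoining a new least or greatest element to a finite lattice doubles its number of subuniverses, and iterating yields the exact factor $2^{(|C_0|-1)+(|C_1|-1)}=2^{|L|-|K|}$.)

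The \emph{only-if} part is the substance of the lemma and the step I expect to be hardest. Assuming equality, I use $A\cup B\in\Sub(L)$ with well-chosen $A,B$. Taking $A=\emptyset$ and $B=\set{x,y}$ forces any two elements of $L\setminus K$ to be comparable, so $L\setminus K$ is a chain; taking $A=\set a$ and $B=\set z$ forces every $z\in L\setminus K$ to be comparable to every $a\in K$. For a fixed such $z$, comparability splits $K$ into $\set{a\in K:a<z}$ and $\set{a\in K:z<a}$; these turn out to be a principal ideal $\ideal_K p$ and a principal filter $\filter_K q$ whose union is $K$. If both are nonempty then $p\prec q$ and $K=\ideal_K p\cup\filter_K q$, i.e.\ $[p,q]$ is an isolated edge of $K$, which is excluded; hence each $z$ lies below $0_K$ or above $1_K$. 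Collecting the lower $z$'s with $0_K$ and the upper $z$'s with $1_K$ into chains $C_0,C_1$ then identifies $L$ with $C_0\gluplus K\gluplus C_1$. I note that an isolated element $u$ of $K$ would itself create the isolated edges $[u_\ast,u]$ and $[u,u^\ast]$ with its unique covers, so the no-isolated-edge hypothesis already subsumes the no-isolated-element one; the real obstruction to eliminate, and the crux of the proof, is precisely the isolated edge produced by a $z$ sitting strictly inside $K$.
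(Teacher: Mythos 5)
Your proof is correct and follows essentially the same route as the paper's: the same intersection-with-$H$ counting map for (i) and (ii), the same equality criterion for (iii) (namely $A\cup B\in\Sub(L)$ for all $A\in\Sub(K)$ and $B\subseteq L\setminus K$), and the same exclusion of an element $z$ strictly between $0_K$ and $1_K$ via the isolated edge $[\bigvee(K\cap\ideal z),\,\bigwedge(K\cap\filter z)]$. The only cosmetic difference is that you verify the ``if'' direction and the comparability of elements of $L\setminus K$ by direct closure checks on small sets, whereas the paper routes both through its auxiliary characterization of isolated elements.
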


\begin{proof}
With respect to the map $\phi\colon \Sub(L)\to \set{H\cap S: S\in \Sub(L)}$, defined by $X\mapsto H\cap X$, each $Y\in \set{H\cap S: S\in \Sub(L)}$ has at most $2^{|L|-|H|}$ preimages. This yields part \eqref{lemmasublata}.
Clearly, \eqref{lemmasublata} implies  \eqref{lemmasublatb}.
The argument above yields a bit more than stated in \eqref{lemmasublata}  and \eqref{lemmasublatb}; namely, for later reference, note the following.
\begin{equation}
\parbox{7.8cm}{If $|\Sub(L)| = |\Sub(K)|\cdot 2^{|L|-|K|}$, then for every $S\in\Sub(K)$ and every subset $X$ of $L\setminus K$, we have that $S\cup X\in\Sub(L)$.}
\label{pbxZuGrthGCw}
\end{equation}

%With respect to the map $\phi\colon \Sub(L)\to \Sub(K)$, defined by $X\mapsto X\cap K$, each $Y\in \Sub(K)$ has at most $2^{|L|-|K|}$ preimages, because $L\setminus K$ has $2^{|L|-|K|}$ subsets. This yields part \eqref{lemmasublatb}. For later reference, note the following obvious observation at this point that
%\begin{equation}
%\parbox{7.7cm}{if $|\Sub(L)| = |\Sub(K)|\cdot 2^{|L|-|K|}$, then for every $S\in\Sub(K)$ and every subset $X$ of $L\setminus K$, we have that $S\cup X\in\Sub(L)$.}
%\label{pbxZuGrthGCw}
%\end{equation}

Next, we claim that for an element $u\in L$, 
\begin{equation}
\parbox{9.3cm}{$u$ is isolated if and only if for every $X\in\Sub(L)$, we have that $X\cup\set u\in\Sub(L)$ and  $X\setminus\set u\in\Sub(L)$.}
\label{pbxUsLtDhmB}
\end{equation}
Assume that $u$ is isolated and $X\in\Sub(L)$. Since $u$ is doubly irreducible, $X\setminus\set u\in\Sub(L)$.  Since $u$ is comparable with all elements of $X$, $X\cup\set u\in\Sub(L)$, proving the  ``only if'' part of \eqref{pbxUsLtDhmB}. To show the converse, assume that $u$ is not isolated. If $u$ is not doubly irreducible, then $u=a\vee b$ with $a,b<u$ or dually, 
and $X:=\set{a,b,u,a\wedge b}\in \Sub(L)$ but $X\setminus\set u\notin\Sub(L)$. If $u\parallel v$ for some $v\in L$, then 
$\set v\in\Sub(L)$ but $\set v \badgood{\cap}{\cup} \set u\notin \Sub(L)$. This proves the ``if'' part, and  \eqref{pbxUsLtDhmB} has been verified.

Next, to prove part \eqref{lemmasublatc}, assume that $K$ has neither an isolated element, nor an isolated edge. First, let $L=C_1\gluplus K\gluplus C_2$. Since every $u$ in $L\setminus K$ is clearly an isolated element of $L$, it follows from a repeated application of  \eqref{pbxUsLtDhmB} that whenever $X\subseteq L\setminus K$ and $S\in\Sub(K)$, then $S\cup X\in \Sub(L)$. Since $L\setminus K$ has  $2^{|L|-|K|}$ subsets, $|\Sub(L)|\geq |\Sub(K)|\cdot 2^{|L|-|K|}$, and we obtain the required equality by the converse inequality given in part \eqref{lemmasublatb}.

Conversely, assume the equality given in \eqref{lemmasublatc}. Let $x$ be an arbitrary element of $L\setminus K$.  Applying \eqref{pbxZuGrthGCw} to $\set{0_K}\in\Sub(K)$ and $\set{1_K}\in\Sub(K)$, we obtain that  both $\set{0_K,x}$ and $\set{1_K,x}$ are in $\Sub(L)$, whence neither $x\parallel 0_K$, nor $x\parallel 1_K$.  So exactly one of the cases $0_K<x<1_K$, $x<0_K$, and $1_K<x$ holds; we are going to exclude the first one. Suppose for a contradiction that $0_K<x<1_K$. Then $x$ is comparable to every $y\in K$, because otherwise $S:=\set{y}$ and $\set x$ would violate \eqref{pbxZuGrthGCw}. By finiteness, we can take $u:=\bigvee (K\cap \ideal x)$ and  $v:=\bigwedge (K\cap \filter x)$. Now if $y\in K$, then either $y>x$ and so $y\in \filter_K v$, or $y<x$ and so $y\in \ideal_K u$, which means that $K=\ideal_K u \cup \filter_K v$. Hence, $[u,v]_K$ is  a prime interval of $K$, and so it is an isolated edge of $K$. This  is a contradiction, which  excludes that $0_K<x<1_K$. Therefore, with the notation $C_0:=\ideal_L 0_K$ and  $C_1:=\filter_L 1_K$, we obtain that $L$ is (isomorphic to)
$C_0\gluplus K\gluplus C_1$. Consequently, in order to show that $C_0$ and $C_1$ are chains and to complete the proof, it suffices to show that every $u\in L\setminus K$ is an isolated element of $L$. Suppose the contrary. Then \eqref{pbxUsLtDhmB} yields a subuniverse $Y\in\Sub(L)$ such that 
\begin{equation}
Y\cup\set u\notin \Sub(L)\quad\text{ or }\quad Y\setminus\set u\notin \Sub(L).
\label{eqgkbnnSTkLkk}
\end{equation}
Since \badgood{$u\notin L$}{$u\notin K$}, 
we have that $Y\cap K=(Y\cup\set u)\cap K = (Y\setminus\set u)\cap K$; we denote this set by $S$. Then $S\in \Sub(K)$ since $Y\in\Sub(L)$. It follows from \eqref{pbxZuGrthGCw} that
\begin{align*}
Y\cup \set u&= S\cup ((Y\cup \set u)\setminus K)\in \Sub(L)
\text{ and }\cr
 Y\setminus \set u&= S\cup ((Y\setminus \set u)\setminus K)\in \Sub(L),
\end{align*}
which contradicts \eqref{eqgkbnnSTkLkk} and completes the proof of Lemma~\ref{lemmasublat}
\end{proof}

%%
%\begin{figure}[htb] 
%\centerline
%{\includegraphics[scale=1.1]{czgkhefig2}}
%\caption{Lattices for Theorem~\ref{thmmain} and Lemma~\ref{lemmaLHFtfsZh}
%\label{figtwo}}
%\end{figure}%
%%

The following lemma is easier and even a computer program could prove it. For the reader's convenience, we give its short proof. The \badgood{minuend}{minuends} in the exponents will be the sizes of the lattices in question.

\begin{lemma}\label{lemmaLHFtfsZh}

 For the lattices given in Figure~\ref{figtwo}, the following \badgood{six}{seven} assertions hold.
\begin{enumeratei}
\item\label{lemmaLHFtfsZha} $|\Sub(B_4)|=13=26\cdot 2^{4-5}$.
\item\label{lemmaLHFtfsZhb}  $|\Sub(N_5)|=23=23\cdot 2^{5-5}$.
\item\label{lemmaLHFtfsZhc}  $|\Sub(\chain2\times \chain3)|=38=19\cdot 2^{6-5}$.
\item\label{lemmaLHFtfsZhd}  $|\Sub(B_4\gluplus B_4)|=85=21.25\cdot 2^{7-5}$.
\item\label{lemmaLHFtfsZhe}  $|\badgood{\Sub(\Sub(B_4\gluplus \chain2\gluplus B_4))}{\Sub(B_4\gluplus \chain2\gluplus B_4)}|=169=21.125\cdot 2^{8-5}$.
\item\label{lemmaLHFtfsZhf}  $|\Sub(M_3)|=20=20\cdot 2^{5-5}$.
\item\label{lemmaLHFtfsZhg}  $|\Sub(B_8)|=74=9.25\cdot 2^{8-5}$.
\end{enumeratei}
\end{lemma}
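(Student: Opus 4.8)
The plan is to verify each of the seven equalities in Lemma~\ref{lemmaLHFtfsZh} by directly enumerating the subuniverses of the small lattices involved; each right-hand side is already written as a multiple of $2^{|L|-5}$, so for each item I only need to confirm the raw count (the numbers $13,23,38,85,169,20,74$) and then observe that the stated multiple of $2^{|L|-5}$ matches. Since these are all fixed finite lattices, the whole lemma is a finite computation, and the main task is to organize the counting so that it is short and transparent rather than a brute-force listing of all $2^{|L|}$ subsets.

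For the small cases I would count by the size of the subuniverse. First note that $\emptyset$ is always a subuniverse, contributing $1$, and every singleton is a subuniverse, contributing $|L|$. For $B_4$ (items \eqref{lemmaLHFtfsZha}) with elements $0, a, b, 1$ where $a \parallel b$: I would enumerate the $2$-element subuniverses (all pairs except $\{a,b\}$ fail to be closed, so I must check which of the six pairs are closed under $\vee,\wedge$), the $3$-element ones, and the full set, arriving at $13$. For $N_5$ (item \eqref{lemmaLHFtfsZhb}) the same size-by-size tally gives $23$, and for $M_3$ (item \eqref{lemmaLHFtfsZhf}) it gives $20$; the only subtlety in $M_3$ is that no pair of the three atoms is closed, since any two atoms join to $1$ and meet to $0$, forcing such a subuniverse up to all of $M_3$. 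For $\chain2\times\chain3$ (item \eqref{lemmaLHFtfsZhc}) and $B_8$ (item \eqref{lemmaLHFtfsZhg}) a direct tally is feasible but more tedious; here I would instead exploit whatever structure is available, for instance counting subuniverses containing a fixed doubly irreducible element versus those not containing it.

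For the two glued-sum cases \eqref{lemmaLHFtfsZhd} and \eqref{lemmaLHFtfsZhe} I would avoid recounting from scratch by invoking Lemma~\ref{lemmasublat}. For $B_4\gluplus B_4$, the glued sum identifies $1$ of the bottom copy with $0$ of the top copy; writing $L = B_4\gluplus B_4$, I can compute $|\Sub(L)|$ by splitting subuniverses according to whether they contain the glue point and how they restrict to each copy, using that a subuniverse of a glued sum is essentially a compatible pair consisting of a subuniverse of the lower block and a subuniverse of the upper block that agree at the shared element. This yields a convolution-type formula in terms of $|\Sub(B_4)|=13$ and the number of subuniverses of $B_4$ containing (respectively omitting) a fixed extremal element, from which $85 = 21.25\cdot 2^{2}$ follows; the case $B_4\gluplus\chain2\gluplus B_4$ is handled identically, with the middle $\chain2$ inserted, giving $169$. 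The main obstacle is purely bookkeeping: making sure the gluing count correctly handles the shared element so that it is neither double-counted nor dropped, and that the ``compatible pair'' description is stated precisely enough that the arithmetic is unambiguous. None of the seven items presents a conceptual difficulty, so the real work is presenting the enumeration compactly while leaving no case unchecked.
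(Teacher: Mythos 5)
Your proposal is correct and follows essentially the same route as the paper: a direct, case-organized enumeration of the subuniverses of each fixed small lattice, with the glued sums handled by observing that a subuniverse of a glued sum is determined by compatible subuniverses of the blocks (the paper obtains $13\cdot 13=169$ for $B_4\gluplus\chain2\gluplus B_4$ in exactly this way, and its count $7+78=85$ for $B_4\gluplus B_4$ agrees with your convolution $7\cdot 7+6\cdot 6$). One small caution: your suggested shortcut for $B_8$ of splitting on a doubly irreducible element cannot work there, since $B_8$ has no doubly irreducible elements; the paper instead uses the size-by-size tally that you also allow as a fallback.
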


\begin{proof} The notation given by Figure~\ref{figtwo} will extensively be used.

Among all subsets of $B_4$, only $\set{a,b}$, $\set{a,b,0}$, and $\set{a,b,1}$ are \emph{not} subuniverses; this proves \eqref{lemmaLHFtfsZha}. For later reference, note that
\begin{equation}
\text{if $L$ is a chain, then $|\Sub(L)|=2^{|L|}$.}
\label{eqtxtFChncn}
\end{equation}
Implicitly, Lemma~\ref{lemmasublat}\eqref{lemmasublatc} will often be  used below.
%To \badgood{prve}{prove} \eqref{lemmaLHFtfsZhb}, 
%
%Next, $S$ will stand for a subuniverse of $N_5$. 
Observe that
\begin{align*}
&|\set{S\in \Sub(N_5): \set{a,c}\cap S=\emptyset}|=\badgood{9}8,\qquad\text{by \eqref{eqtxtFChncn},}\cr
&|\set{S\in \Sub(N_5): \set{a,c}\cap S\neq\emptyset},\,\, b\notin S|=3\cdot 4=12, \text{ and}\cr
&|\set{S\in \Sub(N_5): \set{a,c}\cap S\neq\emptyset},\,\, b\in S|=3,
\end{align*}
whereby $|\Sub(N_5)|=\badgood{9}8+12+3=\badgood{24}{23}$ proves \eqref{lemmaLHFtfsZhb}.
Next, $S$ will belong to $\Sub(\chain 2\times \chain 3)$ even if this is not indicated. Let us compute:
\begin{align*}
&|\set{S: a\notin S}|=26,\qquad\text{by Lemmas \ref{lemmasublat}\eqref{lemmasublatc} and \ref{lemmaLHFtfsZh}\eqref{lemmaLHFtfsZha},}\cr
&|\set{S: a, b\in S|}=3, \text{ since }0,d\in S \text{ and }c\in S\then 1\in S,\cr
&|\set{S: a\in S,\,\,b\notin S,\,\,c\in S|}=1, \text{ since }0,1\in S \text{ and }\badgood{x}d\notin S,\cr
&|\set{S: a\in S,\,\,b\notin S,\,\,c\notin S|}=8, \text{by \eqref{eqtxtFChncn}}.
\end{align*}
Hence, $26+3+1+8=\badgood{28}{38}$ proves \eqref{lemmaLHFtfsZhc}.
Next, $S$ will automatically belong to $\Sub(B_4\gluplus B_4)$. We have that $|\set{S: \set{a, b}\subseteq S|}=7$,
because then $\set{c,d}\subseteq S\then 1\in S$ and $0,e\in S$. Also, $|\set{S: \set{a, b}\not\subseteq S|}=13\cdot 3\cdot 2=78$, because Lemma~\ref{lemmaLHFtfsZh}\eqref{lemmaLHFtfsZha} applies to the upper $B_4$, there are 3 possibilities for $a$ and $b$, and two for 0. Hence, 78+7=85 proves \badgood{\eqref{lemmaLHFtfsZhc}}{\eqref{lemmaLHFtfsZhd}}.
For $S\in \Sub(B_4\gluplus \chain2\gluplus B_4)$, the intersection of $S$ with the lower $B_4$ and that with the upper $B_4$ can  independently\badgood{}{be}{} chosen. Therefore, \eqref{lemmaLHFtfsZhe} follows from \eqref{lemmaLHFtfsZha}.

Next, we count the subuniverses $S$ of $M_3$. There are 4 with the property $|\set{a,b,c}\cap \badgood{M_3}S|\geq 2$, because they contain 0 and 1. There are $3\cdot 4=12$ with $|\set{a,b,c}\cap \badgood{M_3}S|=1$, and 4 with $|\set{a,b,c}\cap \badgood{M_3}S|=0$. Thus, $|\Sub(M_3)|=4+12+4=20$, proving  \eqref{lemmaLHFtfsZhf}.

The argument for $B_8$ is more tedious. It has 9 at most one-element subuniverses. There are 12 edges. We have 6 two-element subuniverses in which the heights of the two elements differ by two and 1 in which this difference is three. We have 12 three-element covering chains and 6 non-covering ones. The number of four-element (necessarily covering) chains is $3\cdot 2=6$, $B_4$ is embedded in 6 cover-preserving ways and (thinking of pairs of complementary elements) in 3 additional ways. 
The five-element sublattices are obtained \badgood{form}{from} cover-preserving $B_4$-sublattices by adding the unique one of $0_{B_8}$ or $1_{B_8}$ that is missing; their number is 6. To each of the $B_4$-sublattices at the bottom we can glue  a $B_4$-sublattice at the top in two ways, whence there are exactly 6  six-element subuniverses. In absence of doubly irreducible elements, there is no seven-element sublattice, and there is 1 eight-element one. The \badgood{some}{sum} of the numbers we have listed is \badgood{76}{74}, proving \eqref{lemmaLHFtfsZhg} and Lemma~\ref{lemmaLHFtfsZh}
\end{proof}

Now, we are in the position to prove \eqref{alignZhfTnhGRb}, mentioned in Remark~\ref{remarkfrthl}.

\begin{proof}[Proof of Remark~\ref{remarkfrthl}]
Combine Lemma~\ref{lemmasublat}\eqref{lemmasublatc} with parts
\eqref{lemmaLHFtfsZhb}, \eqref{lemmaLHFtfsZhc}, and \badgood{\eqref{lemmaLHFtfsZhe}}{\eqref{lemmaLHFtfsZhd}} of  Lemma~\ref{lemmaLHFtfsZh}.
\end{proof}

\section{The rest of the proof}
For brevity, a $k$-element antichain will be called a \emph{\achs{k}}. First, we recall two well-known facts from the folklore.

\begin{lemma}\label{lemmafreeslat}
For every join-semilattice $\badgood L S$ generated by $\set{a,b,c}$, there is
a unique surjective  homomorphism $\phi$ from the free join-semilattice $\fjsl{\tilde a,\tilde b, \tilde c}$, given in Figure~\ref{figthree}, onto $S$ such that $\phi(\tilde a)=a$, $\phi(\tilde b)=b$, and $\phi(\tilde c)=c$. 
\end{lemma}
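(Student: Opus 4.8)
The plan is to invoke the standard universal property of free algebras, specialized to join-semilattices. First I would recall the concrete description of $\fjsl{\tilde a,\tilde b,\tilde c}$ depicted in Figure~\ref{figthree}: its seven elements are exactly the nonempty subsets of $\set{\tilde a,\tilde b,\tilde c}$, ordered by inclusion, with join given by union, and the three generators are the singletons $\set{\tilde a}$, $\set{\tilde b}$, and $\set{\tilde c}$. With this picture in hand, the map $\phi$ is forced to be defined by $\phi(T)=\bigvee_{\tilde x\in T}\phi(\tilde x)$ for every nonempty $T\subseteq\set{\tilde a,\tilde b,\tilde c}$, where $\phi(\tilde a)=a$, $\phi(\tilde b)=b$, and $\phi(\tilde c)=c$. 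Since $S$ is a join-semilattice, each such finite join exists in $S$, so $\phi$ is well defined on all of $\fjsl{\tilde a,\tilde b,\tilde c}$.

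Next I would verify that $\phi$ is a homomorphism. For nonempty $T_1,T_2\subseteq\set{\tilde a,\tilde b,\tilde c}$ the join in $\fjsl{\tilde a,\tilde b,\tilde c}$ is the union $T_1\cup T_2$, and using associativity, commutativity, and idempotency of the join in $S$ one obtains $\phi(T_1\cup T_2)=\bigvee_{\tilde x\in T_1\cup T_2}\phi(\tilde x)=\phi(T_1)\vee\phi(T_2)$, so $\phi$ preserves join. For surjectivity I would use that $\set{a,b,c}$ generates $S$: in a bare join-semilattice the subsemilattice generated by a set is precisely the collection of all finite nonempty joins of its members, so every element of $S$ is of the form $\phi(T)$ for a suitable nonempty $T$, and $\phi$ is onto.

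Finally, uniqueness is immediate. Any homomorphism $\psi$ with $\psi(\tilde a)=a$, $\psi(\tilde b)=b$, and $\psi(\tilde c)=c$ must satisfy $\psi(T)=\bigvee_{\tilde x\in T}\psi(\tilde x)=\bigvee_{\tilde x\in T}\phi(\tilde x)=\phi(T)$, because $T$ is the join of the singletons it contains and $\psi$ preserves joins; hence $\psi=\phi$.

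I do not expect a genuine obstacle here, as this is just the universal mapping property of the free object, which is why the lemma belongs to the folklore. The only point deserving a sentence of care is that we work with bare join-semilattices: because there is no meet operation and no forced least element, the generated subsemilattice consists exactly of the nonempty joins of the generators, and this is what makes both the explicit formula for $\phi$ and the surjectivity argument go through cleanly.
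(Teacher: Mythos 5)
Your proof is correct: identifying $\fjsl{\tilde a,\tilde b,\tilde c}$ with the nonempty subsets of $\set{\tilde a,\tilde b,\tilde c}$ under union and checking existence, surjectivity, and uniqueness via the universal mapping property is exactly the standard argument. The paper itself offers no proof, explicitly treating this lemma as folklore, so your write-up simply supplies the routine details the authors omitted.
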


\begin{figure}[htb] 
\centerline
{\includegraphics[scale=1.1]{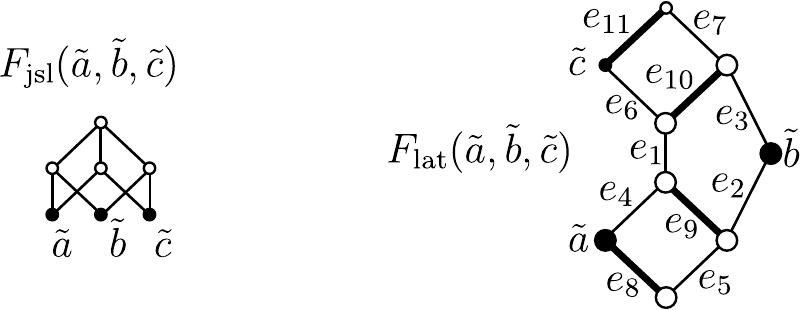}}
\caption{$\fjsl{\tilde a,\tilde b, \tilde c}$ and $\fpl{\tilde a,\tilde b, \tilde c}$
\label{figthree}}
\end{figure}%

\begin{lemma}[{Rival and Wille~\cite[Figure 2]{rivalwille}}]\label{lemmafreeLt}
For every lattice  $K$ generated by $\set{a,b,c}$ such that $a<c$, there is a unique surjective  homomorphism $\phi$ from the finitely presented lattice  $\fpl{\tilde a,\tilde b, \tilde c}$, given in Figure~\ref{figthree},  onto $\badgood{L}K$ such that $\phi(\tilde a)=a$, $\phi(\tilde b)=b$, and $\phi(\tilde c)=c$. 
\end{lemma}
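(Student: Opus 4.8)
The plan is to read $\fpl{\tilde a, \tilde b, \tilde c}$ as the lattice \emph{freely generated} by $\set{\tilde a, \tilde b, \tilde c}$ subject to the single defining relation $\tilde a\leq \tilde c$ — which is precisely the object that the Rival--Wille diagram in Figure~\ref{figthree} depicts — and then to invoke the universal mapping property that such a finitely presented lattice carries. This is the exact analogue of the argument for Lemma~\ref{lemmafreeslat}, only with the extra relation $\tilde a\leq\tilde c$ in place of the empty set of relations.

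First I would note that the hypotheses supply an assignment of generators that \emph{respects} the sole defining relation: sending $\tilde a\mapsto a$, $\tilde b\mapsto b$, $\tilde c\mapsto c$ carries the relation $\tilde a\leq\tilde c$ to $a\leq c$, which holds in $K$ because $a<c$ by assumption. By the universal property of the presentation — for every lattice $L$ and all $\alpha,\beta,\gamma\in L$ with $\alpha\leq\gamma$ there is exactly one homomorphism $\fpl{\tilde a,\tilde b,\tilde c}\to L$ taking the three generators to $\alpha,\beta,\gamma$ — this assignment extends to a homomorphism $\phi$, and this extension is unique. Surjectivity is then immediate: the image $\phi(\fpl{\tilde a,\tilde b,\tilde c})$ is a sublattice of $K$ containing $a=\phi(\tilde a)$, $b=\phi(\tilde b)$, and $c=\phi(\tilde c)$, and since $\set{a,b,c}$ generates $K$ the image must be all of $K$. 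The uniqueness claim of the lemma likewise follows, since any homomorphism with the prescribed values on $\set{\tilde a,\tilde b,\tilde c}$ agrees with $\phi$ on this generating set, and a lattice homomorphism is determined by its restriction to a generating set.

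All the genuine content is concentrated in the universal property itself, that is, in knowing that $\fpl{\tilde a,\tilde b,\tilde c}$ really is the lattice presented by three generators and the relation $\tilde a\leq\tilde c$. If one does not wish to cite this as folklore, the hands-on route is to build $\phi$ directly on terms: represent each element of $\fpl{\tilde a,\tilde b,\tilde c}$ as the value $p(\tilde a,\tilde b,\tilde c)$ of a lattice term $p$, and set $\phi\bigl(p(\tilde a,\tilde b,\tilde c)\bigr):=p(a,b,c)$. The map $\phi$ so defined is then automatically a homomorphism, and it sends the generators where required.

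The step I expect to be the main obstacle is \emph{well-definedness} of this term-wise definition: whenever $p(\tilde a,\tilde b,\tilde c)=q(\tilde a,\tilde b,\tilde c)$ holds in $\fpl{\tilde a,\tilde b,\tilde c}$, one must verify $p(a,b,c)=q(a,b,c)$ in $K$. The reason this goes through is that every such equality in the presented lattice is a consequence of the lattice axioms together with the single relation $\tilde a\leq\tilde c$; each ingredient of such a derivation transfers verbatim to $K$ once $a\leq c$ is available, so the derived equality holds in $K$ as well. Carrying out this transfer cleanly — rather than merely reading the collapse off Figure~\ref{figthree} by inspection — is where the real work sits, and it is exactly the part that Rival and Wille~\cite{rivalwille} allow us to take for granted.
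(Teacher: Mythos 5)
The paper offers no proof of this lemma at all: it is recalled as a folklore fact and the substantive content (that the lattice in Figure~\ref{figthree} is the lattice freely generated by three elements subject to $\tilde a\leq\tilde c$) is delegated entirely to Rival and Wille. Your argument via the universal property of the presentation is correct and defers exactly the same hard part to the same source, so it is in essence the same (implicit) approach as the paper's.
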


We are going to use the two lemmas above in the proof of the following lemma. Implicitly, we will often use the well-known Homomorphism Theorem; see, e.g., Burris and Sankappanavar~\cite[Theorem 6.12]{burrissankappanavar}.

\begin{lemma}\label{lemmaXhrmlNc}
If an $n$-element lattice $L$ has a \achs3, then we have that 
$|\Sub(L)|\leq 20\cdot 2^{n-5}$.
\end{lemma}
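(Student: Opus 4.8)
The plan is to exploit the fact that a 3-antichain $\set{a,b,c}$ in $L$ generates a sublattice $K:=[\set{a,b,c}]$ whose structure is tightly constrained by the free/finitely-presented lattice on three generators (Lemmas~\ref{lemmafreeslat} and~\ref{lemmafreeLt}), and then apply Lemma~\ref{lemmasublat}\eqref{lemmasublatb} to bound $|\Sub(L)|$ in terms of $|\Sub(K)|$. Concretely, since $\set{a,b,c}$ is an antichain, the sublattice it generates contains the three elements together with the joins and meets they produce; the key quantity to control is $|\Sub(K)|\cdot 2^{-|K|}$, because Lemma~\ref{lemmasublat}\eqref{lemmasublatb} gives $|\Sub(L)|\leq |\Sub(K)|\cdot 2^{|L|-|K|}=\bigl(|\Sub(K)|\cdot 2^{5-|K|}\bigr)\cdot 2^{n-5}$. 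Thus it suffices to show that $|\Sub(K)|\cdot 2^{5-|K|}\leq 20$, i.e.\ that the ``normalized'' subuniverse count $|\Sub(K)|/2^{|K|}\leq 20/32=5/8$ for every lattice $K$ generated by a $3$-antichain. The benchmark is $M_3$, for which Lemma~\ref{lemmaLHFtfsZh}\eqref{lemmaLHFtfsZhf} gives exactly $20=20\cdot 2^{5-5}$, so $M_3$ should be the extremal case and the bound is tight.

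First I would reduce to finitely many shapes for $K$. Up to the symmetry of permuting $a,b,c$, either all three of $a,b,c$ are pairwise incomparable generating a genuinely ``wide'' sublattice, or after relabeling we may assume an ordering hypothesis lets us invoke Lemma~\ref{lemmafreeLt} with $\tilde a<\tilde c$; in the antichain case one instead works with the join-semilattice and its meet-dual. The free objects $\fjsl{\tilde a,\tilde b,\tilde c}$ and $\fpl{\tilde a,\tilde b,\tilde c}$ of Figure~\ref{figthree} are finite, so every $K$ generated by a $3$-antichain is a homomorphic image of one of them, and hence has boundedly many elements (the free lattice on three generators subject to the relevant constraints is finite here because the generators sit in an antichain, killing the infinite free lattice). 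This gives a finite list of candidate quotients to examine. For each candidate $K$ I would compute, or bound, $|\Sub(K)|$ and check $|\Sub(K)|\leq \tfrac{5}{8}\cdot 2^{|K|}$, i.e.\ $|\Sub(K)|\cdot 2^{5-|K|}\leq 20$. The lattices $M_3$, $B_4$ (after noting $B_4$ is generated by a $2$-antichain, not $3$, so the relevant $3$-generated overlattices), $B_8$, $\chain2\times\chain3$, and the various small diamonds and their glued sums are the natural candidates, and parts \eqref{lemmaLHFtfsZhc}, \eqref{lemmaLHFtfsZhf}, \eqref{lemmaLHFtfsZhg} of Lemma~\ref{lemmaLHFtfsZh} already supply several of the needed values (e.g.\ $B_8$ gives $9.25<20$, $\chain2\times\chain3$ gives $19<20$).

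The main obstacle I anticipate is twofold. First, enumerating the quotients of the three-generated free lattice cleanly: $\fpl{\tilde a,\tilde b,\tilde c}$ has many elements, so a brute case analysis over all its congruences would be unpleasant, and I would want a structural shortcut — for instance, arguing that any $K$ generated by a $3$-antichain either collapses to something of height at most a small bound or contains an $M_3$ or $B_4$ as a cover-preserving piece, so that the normalized count can be estimated by a submultiplicative/gluing argument. Second, establishing $20$ as the genuine maximum rather than merely an upper bound requires verifying that no three-generated lattice beats $M_3$; the delicate comparisons are the ``medium-sized'' lattices where $|\Sub(K)|$ is close to $\tfrac58\cdot 2^{|K|}$, and one must be careful that adding an element never raises the ratio above $5/8$. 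I would handle this by isolating a lemma of the form ``if $K$ has no isolated element or isolated edge then $|\Sub(K)|\leq \tfrac58\cdot 2^{|K|}$, with equality iff $K\cong M_3$,'' using Lemma~\ref{lemmasublat}\eqref{lemmasublatc} to peel off any chains glued above or below the essential $3$-generated core, thereby reducing to the core itself.

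I would then assemble the pieces: given the $3$-antichain in $L$, set $K$ to be the generated sublattice, note $|K|\geq 5$ is unnecessary since the bound is stated as $20\cdot 2^{n-5}$ and the factor $2^{5-|K|}$ absorbs smaller $K$, invoke the core inequality $|\Sub(K)|\cdot 2^{5-|K|}\leq 20$, and conclude via Lemma~\ref{lemmasublat}\eqref{lemmasublatb} that $|\Sub(L)|\leq 20\cdot 2^{n-5}$, as claimed.
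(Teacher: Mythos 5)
Your overall framework (bound $|\Sub(K)|\cdot 2^{5-|K|}$ for a suitable $K$ attached to the \achs3{} and then apply Lemma~\ref{lemmasublat}) is the right general shape, and you correctly identify $M_3$ with its $20$ subuniverses as the extremal benchmark. However, the central reduction of your plan contains a genuine error: you claim that every lattice $K$ generated by a \achs3{} is a homomorphic image of a finite free object ``because the generators sit in an antichain, killing the infinite free lattice.'' This is exactly backwards. A three-element antichain imposes \emph{no} relations among the generators, so the relevant free object is the free lattice on three generators, which is infinite; it is the presence of a comparability such as $\tilde a<\tilde c$ that makes the finitely presented lattice $\fpl{\tilde a,\tilde b,\tilde c}$ of Rival and Wille finite, and Lemma~\ref{lemmafreeLt} is therefore not applicable to an antichain of generators. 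Consequently the sublattice $[\set{a,b,c}]$ can be arbitrarily large and does not range over a finite list of candidate quotients, so your program of checking $|\Sub(K)|\leq\tfrac58\cdot 2^{|K|}$ case by case over ``finitely many shapes'' cannot get off the ground. Your own second paragraph flags this enumeration as an anticipated obstacle, but it is not merely unpleasant --- it is unbounded --- and no workaround is supplied.

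The paper's proof circumvents precisely this difficulty. It applies Lemma~\ref{lemmafreeslat} to the \emph{free join-semilattice} $\fjsl{\tilde a,\tilde b,\tilde c}$, which has only seven elements, so the join-subsemilattice $S=\set{a,b,c,a\vee b,a\vee c,b\vee c,a\vee b\vee c}$ falls into just four cases according to how many of the three upper edges are collapsed. In two of these cases one does land on a genuine sublattice ($B_8$, resp.\ $M_3$ after a dualization argument) and Lemma~\ref{lemmasublat}\eqref{lemmasublatb} applies; but in the remaining two cases the paper instead counts the possible intersections of subuniverses of $L$ with a six-element \emph{subset} $H$ (a join-subsemilattice, not a sublattice) and invokes the sharper part \eqref{lemmasublata} of Lemma~\ref{lemmasublat}, which requires only a subset. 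That use of \eqref{lemmasublata} on a small hand-picked $H$, rather than \eqref{lemmasublatb} on the full generated sublattice, is the key idea missing from your proposal; without it, or some substitute for it, the argument does not close.
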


\begin{proof}
Let $\set{a,b,c}$ be a \achs3 in $L$. 
Lemma~\ref{lemmafreeslat} yields a unique  join-homomorphism 
from $\fjsl{(\tilde a,\tilde b,\tilde c})$ to $S:=\set{a,b,c,a\vee b, a\vee c, b\vee c, a\vee b\vee c}$ such that $\phi$ maps to $\tilde a$, $\tilde b$, and $\tilde c$ to $a$, $b$, and $c$, respectively. 
Since $\set{a,b,c}$ is an antichain, none of the \badgood{eight}{six} lower edges of $\fjsl{(\tilde a,\tilde b,\tilde c})$ is collapsed by the kernel $\Theta:=\ker(\phi)$ of $\phi$. Hence,  there are only four cases for the join-subsemilattice $S\cong \fjsl{(\tilde a,\tilde b,\tilde c})/\Theta$ of $L$, depending on the number the upper edges collapsed by $\Theta$.

\begin{case}[none of the three upper edges is collapsed by $\Theta$]\label{caseone} Then \badgood{$s$}{$S$} is isomorphic to $\fjsl{(\tilde a,\tilde b,\tilde c})$, whereby $\set{a\vee b, a\vee c,b\vee c}$ is a \achs3. We know from, say, Gr\"atzer~\cite[Lemma 73]{ggglt}, that this \achs3 generates a sublattice isomorphic to $B_8$. Hence, $|\Sub(L)|\leq 9.25\cdot 2^{n-5}\leq 20\cdot 2^{n-5}$ by Lemmas~\ref{lemmasublat}\eqref{lemmasublatb} and \ref{lemmaLHFtfsZh}\eqref{lemmaLHFtfsZhg}, as required.
\end{case}

\begin{figure}[htb] 
\centerline
{\includegraphics[scale=1.1]{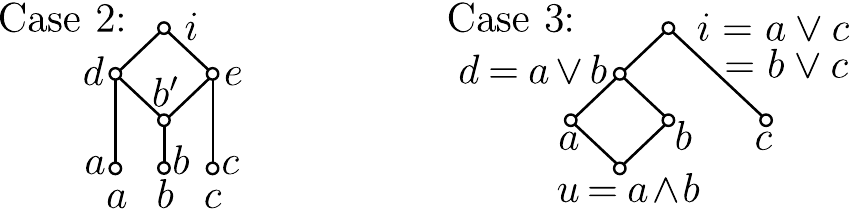}}
\caption{Cases~\ref{caseket} and \ref{casehar} \label{figfour}}
\end{figure}%

\begin{case}[$\Theta$ collapses exactly one upper edge]\label{caseket}
Apart from notation, we have that $d:=a\vee b <a\vee c=:i$ and  $e:=b\vee c < i$; see Figure~\ref{figfour} on the left. Letting $b':=d\wedge e$, we have that $a\vee b'=d$ and $b'\vee c=e$. Since $b\leq b'$ and $b\not\leq a$, we have that $b'\not\leq a$. If we had $a\leq b'$, then 
$i=d\vee e=a\vee b'\vee e=b' \vee e =e$
would be a contradiction. Hence, $a\parallel b'$, and $\set{a,b',c}$ is a \achs3 by $a$--$c$ symmetry.
We can count the subuniverses $T$ of the join-semilattice $H:=\set{a,b',c,d,e,i}$ as follows. We have that
$|\set{T: b'\notin T}|\leq 3\cdot7=21$, because $\set{d,e}\not\subseteq T$ allows only three possibilities for $T\cap \set{d,e}$ and $a\vee c=i$ at most seven possibilities for $T\cap \set{a,c,i}$. Similarly,
\begin{align*}
&|\set{T: b'\in T, a\notin T, c\notin T}|\leq 7,\quad\text{because }
\set{d,e}\subseteq T\then i\in T,\cr
&|\set{T: b'\in T, a\in T, c\notin T}|\leq 3,\quad\text{since }d\in T, \text{ so }e\in T\then i\in T,\cr
&|\set{T: b'\in T, a\notin T, c\in T}|\leq 3,\quad\text{by $a$--$c$ symmetry, and }\cr
&|\set{T: b'\in T, a\in T, c\in T}| =1,\quad\text{because }T=H.
\end{align*}
Note that some of the inequalities above are equalities, but we do not need this fact. 
Forming the sum of the above numbers, the join-semilattice $H$ has at most $35=17.5\cdot 2^{6-5}$ subuniverses. Hence, Lemma~\ref{lemmasublat}\eqref{lemmasublata} yields that  $\Sub(L)\leq 17.5\cdot 2^{n-5}\leq 20\cdot 2^{n-5}$, as required.
\end{case}

\begin{case}[$\Theta$ collapses two of the upper edges]\label{casehar}
Apart from notation, we have that $d:=a\vee b < a\vee c=b\vee c=:i$. Let $u:=a\wedge b$; see Figure~\ref{figfour}. We focus on possible intersections of subuniverses of $L$ with $H:=\set{a,b,c,d,u,i}$. Denoting such an intersection by $S$, we can compute as follows.
\begin{align*}
&|\set{S: c\notin S}|\leq 26, \quad \text{by Lemmas~\ref{lemmasublat}\eqref{lemmasublatb}  and  \ref{lemmaLHFtfsZh}\eqref{lemmaLHFtfsZha},}\cr
&|\set{S: c\in S, a\in S, b\in S}|\leq 1\quad \text{since }H\subseteq[\set{a,b,c}], \cr
&|\set{S: c\in S, a\in S, b\notin S}|\leq 4 \quad \text{since }i\in[\set{a,c}],\cr
&|\set{S: c\in S, a\notin S, b\in S}|\leq 4, \quad \text{by $a$--$b$-symmetry,}\cr
&|\set{S: c\in S, a\notin S, b\notin S, d\in S}|\leq 2, \quad \text{because }i\in S,\cr
&|\set{S: c\in S, a\notin S, b\notin S, d\notin S}|\leq 3 \quad \text{since }u\in S\then i\in S.
\end{align*}
Since the sum of these numbers is $40$, we obtain from Lemma~\ref{lemmasublat}\eqref{lemmasublata}  that $|\Sub(L)|\leq  40\cdot 2^{n-6}=  20\cdot 2^{n-5}$, as required.
\end{case}

\begin{case}[all the three upper edges are collapsed]\label{casenegy}
Clearly, $a\vee b=a\vee c=b\vee c=a\vee b\vee c=:i$. If $a\wedge b=a\wedge c=b\wedge c=a\wedge b\wedge c$ failed, then  the dual of one of the previous three cases would apply. Hence, we can assume that the sublattice $[\set{a,b,c}]$ generated by $\set{a,b,c}$ is isomorphic to $M_3$; see Figure~\ref{figtwo}. Therefore,
$|\Sub(L)|\leq 20\cdot 2^{n-5}$ by Lemmas~\ref{lemmasublat}\eqref{lemmasublatb} and \ref{lemmaLHFtfsZh}\eqref{lemmaLHFtfsZhf}, completing the proof of Case~\ref{casenegy} and that of Lemma~\ref{lemmaXhrmlNc}. \qedhere
\end{case}
\end{proof}

\begin{proof}[Proof of Theorem~\ref{thmmain}]
Part \eqref{thmmaina} is trivial. From Lemmas~\ref{lemmasublat} and \ref{lemmaLHFtfsZh}\eqref{lemmaLHFtfsZha}, we conclude part \eqref{thmmainb}. 
So, we are left only with part  \eqref{thmmainc}.

In what follows, let $L$ be an $n$-element lattice.
We obtain from Lemmas~\ref{lemmasublat}\eqref{lemmasublatc} and \ref{lemmaLHFtfsZh}\eqref{lemmaLHFtfsZhb} that if 
\begin{equation}
\text{$L\cong C_0\gluplus N_5\gluplus C_1$ for finite chains $C_0$ and $C_1$,}
\label{eqandtxtffChnS}
\end{equation}
then $|\Sub(L)|=\badgood{26}{23}\cdot 2^{n-5}$.  In order the complete the proof of Theorem~\ref{thmmain}, it suffices to exclude the existence of a lattice $L$ such that
\begin{equation}
\parbox{8cm}{$|L|=n$, $23\cdot 2^{n-5}\leq |\Sub(L)|<26\cdot 2^{n-5}$, but $L$ is not of the form given in \eqref{eqandtxtffChnS}.}
\label{pbxndrWdztN}
\end{equation}
Suppose, for a contradiction, that $L$ is a lattice satisfying \eqref{pbxndrWdztN}. Then, by  Theorem~\ref{thmmain} \eqref{thmmaina} and \eqref{thmmainb} and Lemma~\ref{lemmaXhrmlNc},
\begin{equation}
\text{$L$ has at least two \achp2 but it has no \achs3.}
\label{eqtxtghxzBmPwPms}
\end{equation}
We claim that
\begin{equation}
\text{$L$ cannot have two \emph{non-disjoint} \achp2.}
\label{eqtxtnmdszjlThlN}
\end{equation}
Suppose to the contrary that $\set{a,b}$ and $\set{c,b}$ are two distinct \achp2 in $L$. Since there is no \achs3 in $L$, we can assume that $a<c$. With $K:=[\set{a,b,c}]$, let $\phi\colon \fpl{\tilde a,\tilde b,\tilde c}\to K$ be the unique lattice homomorphism from Lemma~\ref{lemmafreeLt}, and let $\Theta$ be the kernel of $\phi$. 
We claim that $\Theta$ collapses $e_1$; see Figure~\ref{figthree}. Suppose to the contrary that $\Theta$ does not collapse $e_1$. Since $e_1$ generates the monolith congruence, that is, the smallest nontrivial congruence of the $N_5$ sublattice of $\fpl{\tilde a,\tilde b,\tilde c}$, no other \badgood{edges}{edge} of this $N_5$ sublattice is collapsed. Hence,
$N_5$ is a sublattice of $L$, and it follows from Lemmas~\ref{lemmasublat}\eqref{lemmasublatb} and \ref{lemmaLHFtfsZh}\eqref{lemmaLHFtfsZhb} that $|\Sub(L)|\leq 23\cdot 2^{n-5}$. Thus, \eqref{pbxndrWdztN} yields that $|\Sub(L)| = 23\cdot 2^{n-5}$.
Applying  \badgood{Lemmas}{Lemma}~\ref{lemmasublat}\eqref{lemmasublatc} for $K:=N_5$ and $L$, we obtain that $L$ is of the form \eqref{eqandtxtffChnS}. This contradicts \eqref{pbxndrWdztN}, and we have shown that $\Theta$ does collapse $e_1$. On the other hand, since $a\parallel b$ and $c\parallel b$, none of the thick edges $e_8,\dots,e_{11}$ is collapsed by $\Theta$. Observe that at least one of $e_4$ and $e_6$ is \emph{not} collapsed by $\Theta$, since otherwise $\pair{\tilde a}{\tilde c}$ would belong to $\Theta=\ker(\phi)$ by transitivity and $a=c$ would be a contradiction. 
By duality, we can assume that $e_4$ is not collapsed by $\Theta$.
Since $e_2$, $e_3$, and $e_5$ are perspective to $e_{10}$, $e_9$, and $e_4$, respectively, these edges are not collapsed either. So, with the exception of $e_1$, no edge among the ``big'' elements in Figure~\ref{figthree} is collapsed. Thus, the $\phi$-images of the elements denoted by big circles form a sublattice (isomorphic to) $\chain2\times \chain3$ in $L$. Hence, $|\Sub(L)|\leq 19\cdot 2^{n-5}$  by Lemmas~\ref{lemmasublat}\eqref{lemmasublatb} and \ref{lemmaLHFtfsZh}\eqref{lemmaLHFtfsZhc}, which contradicts our assumption that $L$ satisfies \eqref{pbxndrWdztN}. This proves \eqref{eqtxtnmdszjlThlN}.

To provide a convenient tool to exploit \eqref{eqtxtghxzBmPwPms} and \eqref{eqtxtnmdszjlThlN}, we claim that 
\begin{equation}
\parbox{9.0cm}{if $x,y,z\in L$ such that $|\set{x,y,z}|=3$ and $x\parallel y$, then either $\set{x,y}\subseteq\ideal z$, or $\set{x,y}\subseteq\filter z$.}
\label{eqpbxZhxszLn}
\end{equation}
To see this, assume the premise. Since $L$ has no \achs3,  $z$ is comparable to one of $x$ and $y$. By duality \badgood{an}{and} symmetry, we can assume that $x<z$. Since $z<y$ would imply $x<y$ and
$z\parallel y$ together with $x\parallel y$ would contradict \eqref{eqtxtnmdszjlThlN}, we have that $y<z$. This proves \eqref{eqpbxZhxszLn}.

Next, by  \eqref{eqtxtghxzBmPwPms} and \eqref{eqtxtnmdszjlThlN}, 
 we have a four-element subset $\set{a,b,c,d}$ of $L$ such that $a\parallel b$ and $c\parallel d$. By duality and \eqref{eqpbxZhxszLn}, 
we can assume that $\set{a,b}\subseteq\ideal c$. Applying \eqref{eqpbxZhxszLn} also to $\set{a,b,d}$, we obtain that $\set{a,b}$ is included either in $\filter d$, or in $\ideal d$. Since the first alternative would lead to
$d<a<c$ and so it would contradict $c\parallel d$, we have that $\set{a,b}\subseteq \badgood{\ideal c}{\ideal d}$. 
Thus, $\set{a,b}\subseteq \ideal c\cap \ideal d=\ideal(c\wedge d)$, 
and we obtain that $u:=a\vee b \leq c\wedge d=:v$. Let $S:=\set{a\wedge b,a,b,u,v, c,d, c\vee d}$. Depending on $u=v$ or $u<v$,
$S$ is a sublattice isomorphic to $B_4\gluplus B_4$ or 
$B_4\gluplus\chain2\gluplus B_4$. Using Lemma~\ref{pbxZuGrthGCw} together with \eqref{lemmaLHFtfsZhd} and \eqref{lemmaLHFtfsZhe} of 
Lemma~\ref{lemmaLHFtfsZh}, we  obtain that $|\Sub(L)|\leq 21.25\cdot 2^{n-5}$. This inequality contradicts \eqref{pbxndrWdztN} and completes the proof of Theorem~\ref{thmmain}.
\end{proof}

\end{document}